\newtheorem{theorem}{Theorem}
\newtheorem{lemma}[theorem]{Lemma}
\theoremstyle{definition}
\newtheorem{remark}[theorem]{Remark}
\newcommand{\eqnsection}{
\renewcommand{\theequation}{\thesection.\arabic{equation}}
    \makeatletter
    \csname  @addtoreset\endcsname{equation}{section}
    \makeatother}
\def\e{\mathbf{e}}
\def\E{\mathbb{E}}
\def\N{\mathbb{N}}
\def\R{\mathbb{R}}
\def\Pb{\mathbb{P}}
\newcommand{\equi}{\mathop{\sim}\limits}
\def\={{\,\;\mathop{=}\limits^{\text{(law)}}\;\,}}
\def\qed{\hfill$\square$}
\begin{document}

\title[]{Extreme values of critical and subcritical branching stable processes with positive jumps}
\author[Christophe Profeta]{Christophe Profeta}

\address{
Universit\'e Paris-Saclay, CNRS, Univ Evry, Laboratoire de Math\'ematiques et Mod\'elisation d'Evry, 91037, Evry-Courcouronnes, France.
 {\em Email} : {\tt christophe.profeta@univ-evry.fr}
  }

\keywords{Branching stable process ; Extreme values}

\subjclass[2020]{60J80 ; 60G52 ; 60G51 ; 60G70}

\begin{abstract} 
We consider a branching stable process with positive jumps, i.e. a continuous-time branching process in which the particles evolve independently as stable L\'evy processes with positive jumps. Assuming the branching mechanism is critical or subcritical, we compute the asymptotics of the maximum location ever reached by a particle of the process.

\end{abstract}

\maketitle

\section{Statement of the main result}

\subsection{Introduction}
We consider a one-dimensional branching stable L\'evy process. It is a continuous-time particle system in which individuals move according to independent $\alpha$-stable L\'evy processes, and split at exponential times into a random number of children. \\

\noindent
More precisely, the process starts at time $t = 0$ with a single particle located at the origin.

\begin{enumerate}
\item When not branching, each particle moves independently as a strictly $\alpha$-stable L\'evy process $L$ with positive jumps. We refer to Bertoin \cite[Chapter VIII]{Ber} for an overview of such process. In particular, the existence of positive jumps implies that the scaling parameter $\alpha$ and the skew parameter $\beta$ satisfy the conditions :
$$\alpha \in (0,1)\cup(1,2) \text{ and } \beta \in (-1,1]\qquad \text{ or }\qquad \alpha=1\text{ and } \beta=0.$$
\item Each particle lives for an exponentially distributed time of parameter 1, independently of the others. When it dies, it splits into a random number of children with distribution $\boldsymbol{p}=(p_n)_{n\geq0}$. We assume that the distribution $\boldsymbol{p}$ is non trivial (i.e. $p_1\neq1$) and admits moments of order at least 3, i.e. $\E[\boldsymbol{p}^3]<+\infty$.\\
\end{enumerate}

\noindent
Such process may be constructed by first running a standard continuous-time Markov branching process $Z$ (see for instance \cite[Chapter III]{AtNe}), and then running independent $\alpha$-stable L\'evy processes $(L^{(i)})$ along the edges. With this notation, for each $t>0$, the number of particles alive at time $t$ is thus given by $Z(t)$, and their locations by 
\begin{equation}\label{eq:Z}
\left\{L_t^{(1)}, \ldots, L_t^{(Z(t))} \right\}.
\end{equation}

\noindent
It is classic that when $\E[\boldsymbol{p}]\leq 1$,   the process will go extinct in finite time with probability one. As a consequence, one may define the overall maximum ${\bf M}_{\alpha, \beta}$ ever attained by one of the particle. The main result of the paper is the computation of the asymptotics of its tail distribution :
$$u(x) = \Pb\left({\bf M}_{\alpha, \beta}\geq x\right).$$

\noindent

\begin{theorem}\label{theo:1}
Let  $\kappa_{\alpha, \beta}>0$ be the constant such that :
$$\Pb\left(L_1\geq x\right) \equi_{x\rightarrow +\infty} \kappa_{\alpha, \beta}\;x^{-\alpha}.
$$

\begin{enumerate}[$i)$]
\item Assume that $\E[\boldsymbol{p}]<1$. The asymptotics of ${\bf M}_{\alpha, \beta}$ is given by 
$$\Pb({\bf M}_{\alpha, \beta}\geq x) \equi_{x\rightarrow +\infty}  \frac{\kappa_{\alpha, \beta}}{1-\E[\boldsymbol{p}]}  x^{-\alpha}.$$
\item Assume that $\E[\boldsymbol{p}]=1$. The asymptotics of ${\bf M}_{\alpha, \beta}$ is given by 
$$\Pb({\bf M}_{\alpha, \beta}\geq x) \equi_{x\rightarrow +\infty}  \sqrt{\frac{2\kappa_{\alpha, \beta}}{\sigma^2}}  x^{-\alpha/2}$$
where
$$\sigma^2 =\emph{Var}(\boldsymbol{p})= \sum_{n=2}^{+\infty} n(n-1)p_n. $$
\end{enumerate}
\end{theorem}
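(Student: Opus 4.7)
The proof begins by deriving a functional equation for $u$ by conditioning on the first branching event of the ancestral particle. Let $T$ be the first branching time (exponentially distributed with parameter~$1$), $M_T=\sup_{s\leq T}L_s$, and $N$ the number of offspring produced at time $T$, with generating function $f(s)=\E[s^N]$. Since the $N$ offspring subsequently start i.i.d.\ copies of the whole branching process translated by $L_T$, decomposing $\{\mathbf{M}_{\alpha,\beta}\geq x\}$ according to whether the ancestor has already crossed $x$ before time $T$ yields
\begin{equation*}
u(x) = \Pb(M_T\geq x) + \E\!\left[\mathbf{1}_{M_T<x}\,g\bigl(u(x-L_T)\bigr)\right],
\end{equation*}
with $g(s):=1-f(1-s)$, which admits the expansion $g(s)=\E[\boldsymbol{p}]\,s-\tfrac{1}{2}f''(1)\,s^{2}+O(s^{3})$ near $0$. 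A key preliminary is the forcing-term asymptotics $\Pb(M_T\geq x)\sim\kappa_{\alpha,\beta}\,x^{-\alpha}$, obtained by conditioning on $T=t$, using the ``one big jump'' principle $\Pb(M_t\geq x)\sim t\kappa_{\alpha,\beta}\,x^{-\alpha}$, and integrating against the exponential density.

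For part $(i)$ the strategy is a two-stage bootstrap. Since $g(s)\leq \E[\boldsymbol{p}]\,s$, iterating the functional inequality $u(x)\leq\Pb(M_T\geq x)+\E[\boldsymbol{p}]\,\E[u(x-L_T)\mathbf{1}_{M_T<x}]$ produces the a priori bound $u(x)=O(x^{-\alpha})$: the contraction factor $\E[\boldsymbol{p}]<1$ renders the induced geometric series of regularly-varying functions of index $-\alpha$ convergent. Substituting this crude bound back, the quadratic remainder in $g$ contributes only $O(x^{-2\alpha})=o(x^{-\alpha})$, so the recursion becomes
$$u(x) = \Pb(M_T\geq x) + \E[\boldsymbol{p}]\,\E\!\left[u(x-L_T)\,\mathbf{1}_{M_T<x}\right] + o(x^{-\alpha}).$$
A dominated-convergence argument --- using that $u(x-L_T)/u(x)\to 1$ for fixed $L_T$ when $u$ is regularly varying of index $-\alpha$, together with a dominating function extracted from the crude bound and a split of $L_T$ according to whether it is moderate or close to $x$ --- then yields $(1-\E[\boldsymbol{p}])\,x^{\alpha}u(x)\to\kappa_{\alpha,\beta}$, which is $(i)$.

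The critical case $(ii)$ is the substantial part: the linearization of $g$ at $0$ now has slope $1$, so the iteration of the first step degenerates and the decay rate of $u$ must be extracted from the \emph{quadratic} correction. Rewriting the recursion as
\begin{equation*}
\E\!\left[\mathbf{1}_{M_T<x}\bigl(u(x)-u(x-L_T)\bigr)\right] + u(x)\,\Pb(M_T\geq x) = \Pb(M_T\geq x) - \tfrac{\sigma^{2}}{2}\,\E\!\left[\mathbf{1}_{M_T<x}\,u(x-L_T)^{2}\right] + O(u^{3}),
\end{equation*}
and inserting the ansatz $u(x)\sim C\,x^{-\alpha/2}$, the right-hand side should be $(\kappa_{\alpha,\beta}-\tfrac{\sigma^{2}}{2}C^{2})\,x^{-\alpha}+o(x^{-\alpha})$, while the left-hand side should vanish to order $o(x^{-\alpha})$ once the increments $L_T$ are shown to contribute no $x^{-\alpha}$-term on the event $\{M_T<x\}$. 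Matching coefficients then forces $C=\sqrt{2\kappa_{\alpha,\beta}/\sigma^{2}}$, exactly the constant in the theorem.

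The hard part is making this heuristic rigorous, i.e.\ proving that $u(x)$ really decays like $x^{-\alpha/2}$, neither faster nor slower. For the upper bound $u(x)\leq C^{+}x^{-\alpha/2}$ I would construct a super-solution of a slightly perturbed form of the critical recursion and conclude by a comparison principle. For the matching lower bound a probabilistic construction is more natural: Kolmogorov's classical survival estimate $\Pb(Z(t)>0)\sim 2/(\sigma^{2}t)$, combined through the many-to-one formula with the expected number of particles alive at time $t$ whose path has crossed $x$, is optimized at $t\asymp x^{\alpha/2}$ to give a lower bound of order $x^{-\alpha/2}$; a second-moment (Paley--Zygmund) step then pins down the sharp constant $\sqrt{2\kappa_{\alpha,\beta}/\sigma^{2}}$.
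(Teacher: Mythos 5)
Your derivation of the functional equation by conditioning on the first branching time is exactly the paper's starting point, and your treatment of the subcritical case $(i)$ is essentially sound in outline (bootstrap to $u(x)=O(x^{-\alpha})$, then kill the quadratic term); note however that your final dominated-convergence step invokes ``$u(x-L_T)/u(x)\to1$ when $u$ is regularly varying of index $-\alpha$'', whereas at that stage you only have the upper bound $u=O(x^{-\alpha})$, not regular variation --- you would need to iterate the linearized recursion into a convergent series and apply the one-big-jump principle term by term, or otherwise break this circularity.

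The genuine gap is in the critical case $(ii)$, which you yourself flag as ``the hard part'': you insert the ansatz $u(x)\sim Cx^{-\alpha/2}$, match coefficients, and then only \emph{propose} (without constructing) a super-solution/comparison-principle argument for the upper bound and a Paley--Zygmund argument for the sharp lower constant. None of these steps is carried out, and the coefficient-matching itself presupposes precisely the regular variation of $u$ that has to be proven; moreover your claim that $\E[\mathbf{1}_{\{M_T<x\}}(u(x)-u(x-L_T))]=o(x^{-\alpha})$ is delicate when $\alpha\le1$ (no first moment for $L_T$) and is not justified. The paper avoids all of this by a different mechanism: it takes Laplace transforms of the integral equation, sandwiches the transform of the combined quantity $\varphi_0=(1-\E[\boldsymbol{p}])u+\tfrac{1}{2}\E[\boldsymbol{p}^2-\boldsymbol{p}]u^2$ between expressions involving $\E[e^{-\lambda S_{\e}}]$ and $\E[e^{-\lambda L_{\e}^{+}}]$ (whose tails are asymptotically equivalent), deduces $\mathcal{L}[\varphi_0](\lambda)\sim\kappa_{\alpha,\beta}\,\eta_\alpha(\lambda)$, and then converts back via Karamata's Tauberian theorem plus a monotone-density (differentiation) lemma; both regimes $(i)$ and $(ii)$ then drop out of the single asymptotic $\varphi_0(x)\sim\kappa_{\alpha,\beta}x^{-\alpha}$ according to which term of $\varphi_0$ dominates. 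The only place the paper needs an a priori pointwise bound $u(x)\le Cx^{-\alpha/2}$ is the critical case with $\alpha\in(1,2)$, where it is obtained from Kolmogorov's survival estimate and a many-to-one bound --- close in spirit to your proposed lower-bound construction, but used for a different purpose. As it stands, your proposal establishes $(i)$ modulo the circularity noted above, but not $(ii)$.
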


\begin{remark}\label{rem:1}
The constant $\kappa_{\alpha,\beta}$ may be computed explicitly, but depends on the normalization chosen for  $L$. For instance, following Sato \cite[p.88]{Sat}, if the characteristic exponent of $L$ is given by :
\begin{equation}\label{eq:Lt}
 \ln\left(\E\left[e^{\textbf{i} \lambda L_t}\right]\right)= \begin{cases}
 - c_{\alpha, \beta} |\lambda|^\alpha \left(1- \textbf{i}\beta \tan\left(\frac{\pi \alpha}{2}\right)\text{sgn}(\lambda)\right)&\quad \text{for }\alpha\neq 1\\
 -  |\lambda| &\quad \text{for }\alpha= 1\\ 
 \end{cases}
 \end{equation}
  with $c_{\alpha, \beta} = \cos\left(\frac{\pi \beta}{2} \min(\alpha,\;2-\alpha)\right)$, then,  
$$\kappa_{\alpha, \beta} =\begin{cases}
 \frac{1}{\pi} \Gamma(\alpha) \sin\left(\frac{\pi \alpha}{2}(1+\beta)\right)&\quad \text{if }\alpha <1\\
 \frac{1}{\pi} &\quad \text{if }\alpha =1\\
  \frac{1}{\pi} \Gamma(\alpha) \sin\left(\frac{\pi}{2}(\alpha +\alpha\beta-2\beta)\right)&\quad \text{if }\alpha >1\\
\end{cases}$$
\end{remark}

The occurrence of $\sigma^2$ is a classic feature of such asymptotics, and was already observed by Fleischman \& Sawyer \cite{FlSa} in the case of Branching Brownian motion, or by Lalley \& Shao \cite{LaSh1} in the case of Branching Random Walks.\\

In the symmetric case (i.e. $\beta=0$) and when $p_0=p_2=\frac{1}{2}$, Theorem \ref{theo:1} was first obtained by Lalley \& Shao \cite{LaSh2}, who prove that
\begin{equation}\label{eq:LaSh}
\Pb\left({\bf M}_{\alpha,0}\geq x\right) \equi_{x\rightarrow +\infty} \sqrt{\frac{2}{\alpha}} x^{-\alpha/2}
\end{equation}
when choosing the normalization 
$$\E\left[e^{\textbf{i}\lambda L_1}\right] = \exp\left(-\int_\R (1-e^{\textbf{i}\lambda x}) \frac{dx}{|x|^{\alpha+1}} \right)=\exp\left(-\frac{\pi}{\Gamma(\alpha+1)\sin\left(\frac{\pi \alpha}{2}\right)} |\lambda|^{\alpha}\right).$$
In this case,  from Remark \ref{rem:1} and the recurrence formula for the Gamma function, the constant $\kappa_{\alpha,0}$ equals
$$\kappa_{\alpha,0} = \frac{\pi}{\Gamma(\alpha+1)\sin\left(\frac{\pi \alpha}{2}\right)}\times \frac{1}{\pi} \Gamma(\alpha) \sin\left(\frac{\pi \alpha}{2}\right)  = \frac{1}{\alpha}$$
hence (\ref{eq:LaSh}) agrees with Theorem \ref{theo:1} since $\sigma^2=1$ for this binary branching mechanism. \\

The starting point of \cite{LaSh2} was to show that $u$ is the  solution of a pseudo-differential equation involving the generator of the symmetric stable L\'evy process. This in turn allows to obtain a Feynman-Kac representation of $u$, and the authors then deduce the asymptotics of Theorem \ref{theo:1} after a careful analysis of the jumps of the underlying stable L\'evy process. We shall propose here another approach and rather work with an integral equation.

\subsection{An integral equation for $u$}
We start by writing down the integral equation satisfied by the function $u$. Let us denote by 
$$S_t = \sup_{s\in [0,t]}L_s$$
the running supremum of the stable L\'evy process $L$, and let $\e$ be a standard exponential random variable of parameter 1, independent from $L$.
\begin{lemma}
The function $u$ is a solution of the  integral equation :
\begin{equation}\label{eq:u}
u(x) =\Pb\left(S_\e\geq x\right)
+ \E\left[1_{\{S_\e<x\}}u(x-L_\e)\right]- \Phi_0(x)+ \Phi_R(x)
\end{equation}
where the main term $\Phi_0$ is given by 
$$\Phi_0 (x) = (1- \E[\boldsymbol{p}])  \E\left[1_{\{S_\e<x\}}u(x-L_\e)\right] +  \frac{1}{2}\E\left[\boldsymbol{p}^2-\boldsymbol{p}\right]\E\left[1_{\{S_\e<x\}}u^2(x-L_\e)\right]$$
and the remainder $\Phi_R$ satisfies the bounds 
\begin{equation}\label{eq:phiR}
0\leq \Phi_R(x)  \leq \E[\boldsymbol{p}^3]  \E\left[1_{\{S_\e<x\}}u^3(x-L_\e)\right] .
\end{equation}
\end{lemma}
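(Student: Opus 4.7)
The plan is to perform a first-step analysis at the initial branching event and then Taylor-expand the resulting probability generating function $f(s) = \sum_{n \geq 0} p_n s^n$ of $\boldsymbol{p}$.

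I would first decompose $\{\mathbf{M}_{\alpha,\beta} \geq x\}$ according to whether the initial particle reaches $x$ before it dies. By the branching property applied at the first lifetime $\e$, conditionally on $(L_s)_{s \leq \e}$, the initial particle spawns $N \sim \boldsymbol{p}$ independent children at position $L_\e$, each starting an independent copy of the whole process, so that
\begin{equation*}
\mathbf{M}_{\alpha,\beta} = \max\bigl(S_\e,\ L_\e + \max_{1 \leq i \leq N} \widetilde{\mathbf{M}}^{(i)}\bigr),
\end{equation*}
where the $\widetilde{\mathbf{M}}^{(i)}$ are independent copies of $\mathbf{M}_{\alpha,\beta}$. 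On $\{S_\e \geq x\}$ the maximum already exceeds $x$; on $\{S_\e < x\}$, conditioning on $L_\e$ and $N$, none of the $N$ subtrees crosses $x$ with probability $(1 - u(x - L_\e))^N$. Taking expectations gives
\begin{equation*}
u(x) = \Pb(S_\e \geq x) + \E\!\left[1_{\{S_\e < x\}}\bigl(1 - f(1 - u(x - L_\e))\bigr)\right].
\end{equation*}

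Next, I would Taylor-expand $f$ around $s = 1$ to second order with remainder. Since $f(1) = 1$, $f'(1) = \E[\boldsymbol{p}]$, $f''(1) = \E[\boldsymbol{p}(\boldsymbol{p} - 1)]$ and $f'''(s) = \sum_{n \geq 3} n(n-1)(n-2)\, p_n s^{n-3} \geq 0$ on $[0,1]$, one has, for some $\xi_v \in (1-v, 1)$,
\begin{equation*}
1 - f(1-v) = v - (1 - \E[\boldsymbol{p}])\, v - \tfrac{1}{2}\E[\boldsymbol{p}^2 - \boldsymbol{p}]\, v^2 + \tfrac{1}{6} f'''(\xi_v)\, v^3.
\end{equation*}
Substituting $v = u(x - L_\e)$ and inserting this identity into the integral equation above produces exactly the decomposition \eqref{eq:u}, with $\Phi_0$ as stated and $\Phi_R(x) = \tfrac{1}{6}\E\!\left[1_{\{S_\e < x\}} f'''(\xi_v)\, u^3(x - L_\e)\right]$. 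Using the integral form of the Taylor remainder rather than Lagrange's form bypasses any measurability issue for $\xi_v$.

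Finally, the bound \eqref{eq:phiR} follows from $0 \leq f'''(\xi_v) \leq f'''(1) = \E[\boldsymbol{p}(\boldsymbol{p}-1)(\boldsymbol{p}-2)] \leq \E[\boldsymbol{p}^3]$, the last inequality using that $\boldsymbol{p}$ takes values in $\N$. The standing assumption $\E[\boldsymbol{p}^3] < \infty$ is precisely what makes this remainder controllable at the next stage. I do not expect any serious obstacle: the first step is a standard one-step branching decomposition, and the Taylor expansion is elementary. The only point requiring a little care is the sign of the cubic remainder, which is positive thanks to the non-negativity of the coefficients of $f'''$.
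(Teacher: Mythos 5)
Your proposal is correct and follows essentially the same route as the paper: a one-step Markov decomposition at the first branching time, followed by a second-order Taylor expansion with a positive cubic remainder bounded via $\E[\boldsymbol{p}^3]$. The only cosmetic difference is that you expand the generating function $f$ at $s=1$ (with the integral remainder to avoid measurability issues for $\xi_v$), whereas the paper expands $(1-u)^n$ for each $n$ and sums; these are the same computation.
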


\begin{proof}
We start by applying the Markov property at the first branching event :
\begin{align*}
\Pb({\bf M}_{\alpha, \beta}< x) &= p_0 \Pb\left(S_\e< x\right) + \sum_{n=1}^{+\infty} p_n\, \Pb\left(S_\e<x,\; L_\e+{\bf M}_{\alpha, \beta}^{(1)}<x,\ldots,  L_\e+{\bf M}_{\alpha, \beta}^{(n)}<x \right)
\end{align*}
where the  random variables $({\bf M}^{(n)}_{\alpha, \beta})_{n\in \N}$ are independent copies of ${\bf M}_{\alpha, \beta}$, which are also independent of the pair $(L_\e, S_\e)$.
As a consequence, we obtain the integral equation :
\begin{equation}\label{eq:u0}
1-u(x) = p_0 \Pb\left(S_\e< x\right)+ \sum_{n=1}^{+\infty} p_n \, \E\left[1_{\{S_\e<x\}}\; (1- u(x-L_\e))^n\right].
\end{equation}
Plugging the Taylor expansion with integral remainder 
$$(1-u)^n = 1 - n u + \frac{n(n-1)}{2} u^2 -  \frac{n(n-1)(n-2)}{6} u^3  \int_0^1 (1-u t)^{n-3} (1-t)^2 dt$$
in (\ref{eq:u0}), we deduce that 
$$
u(x) =\Pb\left(S_\e\geq x\right)
+ \E\left[1_{\{S_\e<x\}}u(x-L_\e)\right]- \Phi_0(x)+ \Phi_R(x)
$$
where the remainder $\Phi_R$ equals :
$$\Phi_R(x) = \sum_{n\geq 3}p_n   \frac{n(n-1)(n-2)}{6} \int_0^1  \E\left[1_{\{S_\e<x\}}u^3(x-L_\e) (1-u(x-L_\e) t)^{n-3} \right] (1-t)^2 dt  .  $$
Since $0\leq u(x)\leq 1$ for any $x\geq0$, the upper bound for $\Phi_R$ is obtained by bounding the term to the power $n-3$ by 1.

\end{proof}

\begin{remark}
Several terms in the equation (\ref{eq:u}) satisfied by $u$ look like convolutions products. This will lead us to work with Laplace transforms and we thus set for a positive function $f$ :
$$\mathcal{L}[f](\lambda) = \int_0^{+\infty} e^{-\lambda x} f(x) dx. $$
We shall repeatedly use in the following the  standard Karamata's Tauberian theorem (see for instance Korevaar \cite[Theorem 8.1]{Kor}) which states that for $\gamma\geq 0$ :
\begin{equation}
\mathcal{L}[f](\lambda) \mathop{\sim}\limits_{\lambda\rightarrow0}\frac{1}{\lambda^\gamma}h\left(\frac{1}{\lambda}\right)
\quad\Longleftrightarrow\quad \int_0^xf(z)dz\mathop{\sim}\limits_{x\rightarrow+\infty} \frac{1}{\Gamma(1+\gamma)}x^{\gamma}h(x),
\end{equation}
where $h$ is a slowly varying function.\\

\end{remark}

\begin{remark}
It may be noted that Equation (\ref{eq:u}) involves the distributions of $L_\e$ and $S_\e$. The key observation is the following equivalence of asymptotics for strictly $\alpha$-stable L\'evy processes with positive jumps, see Bertoin \cite[p.221]{Ber} :
\begin{equation}\label{eq:asympLS}
\Pb\left(L_1\geq x\right) \equi_{x\rightarrow +\infty}\Pb\left(S_1\geq x\right) \equi_{x\rightarrow +\infty} \kappa_{\alpha, \beta}\;x^{-\alpha}.
\end{equation}
We shall indeed prove, through Laplace transforms, that as $x\rightarrow +\infty$ :
$$\Pb\left(L_1\geq x\right) \leq (1- \E[\boldsymbol{p}]) u(x) +  \frac{1}{2}\E\left[\boldsymbol{p}^2-\boldsymbol{p}\right]u^2(x) \leq \Pb\left(S_1\geq x\right)$$
and the result will follow from the equivalence of both asymptotics.
\end{remark}

The remainder of the paper is devoted to the proof of Theorem \ref{theo:1} : Section \ref{sec:2} is dedicated to the case $\alpha \in (0,1]$ and Section \ref{sec:3} to the case $\alpha\in(1,2)$. The general idea of the proof is the same in both cases, and is composed of three steps. We shall first write down some general inequalities involving Laplace transforms, then apply these inequalities to Equation (\ref{eq:u}) and finally pass to the limit and apply Karamata's tauberian theorem. The main difference between both cases is the existence of the first moment of $L_\e$ when $\alpha\in(1,2)$. This will require us to make some extra computations, in order to remove the "first order" terms.

\section{The case $0<\alpha\leq 1$}\label{sec:2}

\noindent

\subsection{Preliminary lemma}
We start by writing some general bounds for the Laplace transform of the terms appearing in Equation (\ref{eq:u}). In the forthcoming proofs, we will  frequently use the case $f=u$ and $f=u^2$. To simplify the notation, we set :
$$\eta_\alpha(\lambda) =
\begin{cases}
\Gamma(1-\alpha) \lambda^{\alpha-1}& \text{if }\alpha<1\\
\displaystyle -\ln(\lambda)& \text{if }\alpha=1
\end{cases}
$$
and $L_\e^+ = \max(0, L_\e)$. Note that using (\ref{eq:asympLS}), the Tauberian theorem and the scaling property to remove the independent exponential random variable $\e$, we have  the asymptotics
\begin{equation}\label{eq:SeLe}
\frac{1-\E\left[e^{-\lambda S_\e}\right]}{\lambda} \equi_{\lambda\downarrow 0} \frac{1-\E\left[e^{-\lambda L_\e^+}\right]}{\lambda} \equi_{\lambda\downarrow 0}\kappa_{\alpha,\beta}\,\eta_\alpha(\lambda).
\end{equation}
\begin{lemma}\label{lem:f}
Assume that $\alpha\in(0,1]$ and let $f:[0,+\infty)\rightarrow[0,+\infty)$ be a positive and decreasing function. The following inequalities holds :
\begin{enumerate}[$i)$]
 \item Upper bound :
$$ \int_0^{+\infty}  e^{-\lambda x}  \E\left[1_{\{S_\e<x\}}f(x-L_\e)\right] dx \leq\E\left[e^{-\lambda S_\e}\right] \mathcal{L}[f](\lambda)$$
\item Lower bound :
\begin{multline*}
 \int_0^{+\infty}  e^{-\lambda x}  \E\left[1_{\{S_\e<x\}}f(x-L_\e)\right]dx\\
 \geq  \E\left[e^{-\lambda L_\e^+}\right]\mathcal{L}[f](\lambda)+f(0)\frac{\E\left[e^{-\lambda S_\e}\right]-\E\left[e^{-\lambda L_\e^+}\right]}{\lambda}   - \E\left[1_{\{L_\e<0\}} \int_{0}^{-L_\e} e^{-\lambda z  } f(z)dz\right]
\end{multline*}
\item  Assume that $\lim\limits_{x\rightarrow+\infty}f(x)=0$, then  \\
$$\lim_{\lambda\downarrow0} \frac{1}{ \eta_\alpha(\lambda)} \E\left[1_{\{L_\e<0\}} \int_{0}^{-L_\e} e^{-\lambda z  } f(z)dz\right]=0.$$
\end{enumerate}
\end{lemma}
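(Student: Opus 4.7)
The plan is to handle each of the three parts by direct manipulation of the integral, using Fubini, changes of variable, and the monotonicity of $f$. For $(i)$, Fubini rewrites the left-hand side as $\E\bigl[\int_{S_\e}^{\infty} e^{-\lambda x} f(x-L_\e)\,dx\bigr]$; the substitution $y = x - S_\e$ then yields $\E\bigl[e^{-\lambda S_\e}\int_0^{\infty} e^{-\lambda y} f(y + S_\e - L_\e)\,dy\bigr]$. Since $S_\e \geq L_\e$ one has $f(y + S_\e - L_\e) \leq f(y)$ by monotonicity, which immediately produces the claimed upper bound $\E[e^{-\lambda S_\e}]\,\mathcal{L}[f](\lambda)$.

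For $(ii)$, the key observation is that $S_\e \geq L_\e^+ \geq 0$ allows the decomposition
$$\int_{S_\e}^{\infty} e^{-\lambda x} f(x-L_\e)\,dx = \int_{L_\e^+}^{\infty} e^{-\lambda x} f(x-L_\e)\,dx - \int_{L_\e^+}^{S_\e} e^{-\lambda x} f(x-L_\e)\,dx.$$
On the range $[L_\e^+, S_\e]$ of the second integral one has $x - L_\e \geq L_\e^+ - L_\e \geq 0$, so $f(x-L_\e) \leq f(0)$, and taking expectation produces the term $f(0)\,[\E[e^{-\lambda S_\e}] - \E[e^{-\lambda L_\e^+}]]/\lambda$ with the correct sign after subtraction. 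For the first integral, I split on the sign of $L_\e$: when $L_\e \geq 0$ the substitution $y = x - L_\e$ gives exactly $e^{-\lambda L_\e^+}\mathcal{L}[f](\lambda)$; when $L_\e < 0$, writing $f(x - L_\e) = f(x + |L_\e|)$ and substituting $u = x + |L_\e|$ followed by an algebraic rearrangement yields
$$\int_0^\infty e^{-\lambda x} f(x - L_\e)\,dx = \mathcal{L}[f](\lambda) - \int_0^{-L_\e} e^{-\lambda z} f(z)\,dz + \bigl(e^{\lambda|L_\e|} - 1\bigr)\int_{-L_\e}^{\infty} e^{-\lambda u} f(u)\,du.$$
Discarding the last nonnegative summand and taking expectation produces precisely the first and third terms of the claimed lower bound.

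For $(iii)$, Fubini gives
$$\E\left[1_{\{L_\e < 0\}} \int_0^{-L_\e} e^{-\lambda z} f(z)\,dz\right] = \int_0^{\infty} e^{-\lambda z} f(z)\,\Pb(L_\e < -z)\,dz.$$
If $\beta = 1$ and $\alpha < 1$ the underlying L\'evy process is a subordinator and this quantity vanishes; otherwise, classical stable tail asymptotics yield $\Pb(L_\e < -z) = O(z^{-\alpha})$ as $z \to \infty$. Using $f(z) \to 0$, I fix $\varepsilon > 0$ and choose $M$ with $f(z) \leq \varepsilon$ for $z \geq M$: the contribution from $[0, M]$ is bounded independently of $\lambda$, while the tail is $O\bigl(\varepsilon \int_M^{\infty} e^{-\lambda z} z^{-\alpha}\,dz\bigr)$, which is of order $\varepsilon\,\eta_\alpha(\lambda)$ as $\lambda \downarrow 0$ by direct computation in both cases $\alpha < 1$ and $\alpha = 1$. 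Dividing by $\eta_\alpha(\lambda) \to \infty$ and then letting $\varepsilon \to 0$ gives the limit $0$.

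The main obstacle is the bookkeeping in $(ii)$ in the subcase $L_\e < 0$: one must isolate the $-\int_0^{-L_\e} e^{-\lambda z} f(z)\,dz$ contribution while discarding a non-obvious positive remainder involving the diverging factor $e^{\lambda|L_\e|}$, and verify that its sign is indeed favorable. The other parts are largely mechanical, with the only delicate point being the matching of the stable tail scale $z^{-\alpha}$ against $\eta_\alpha(\lambda)$ in $(iii)$.
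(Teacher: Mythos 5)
Your proposal is correct and follows essentially the same route as the paper: part $(i)$ via $f(x-L_\e)\le f(x-S_\e)$, part $(ii)$ via the same three-term decomposition (comparing $S_\e$ to $L_\e^+$, then splitting on the sign of $L_\e$ and discarding the nonnegative term coming from $e^{-\lambda L_\e}\ge 1$), and part $(iii)$ via the subordinator case plus a split at the level where $f\le\varepsilon$ combined with the tail $\Pb(-L_\e>z)=O(z^{-\alpha})$. The only differences are cosmetic (you decompose the integration range rather than the indicator in $(ii)$, and apply Fubini before splitting in $(iii)$).
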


\begin{proof}
The upper bound $i)$ is a direct consequence of the a.s. inequality $L_\e \leq S_\e$. Indeed, since $f$ is decreasing, the Laplace transform of the convolution product yields 
$$ \int_0^{+\infty}  e^{-\lambda x}  \E\left[1_{\{S_\e<x\}}f(x-L_\e)\right] dx \leq  \int_0^{+\infty}  e^{-\lambda x}  \E\left[1_{\{S_\e<x\}}f(x-S_\e)\right]dx = \E\left[e^{-\lambda S_\e}\right]\mathcal{L}[f](\lambda).$$
For the lower bound $ii)$, still using that $f$ is decreasing and $L_\e \leq S_\e$ a.s., 
\begin{align*}
&\int_0^{+\infty}e^{-\lambda x}  \E\left[1_{\{S_\e<x\}}f(x-L_\e)\right]dx\\
 &\qquad \qquad= \int_0^{+\infty}e^{-\lambda x}  \E\left[(1_{\{S_\e<x\}} - 1_{\{L_\e<x\}})f(x-L_\e)\right]dx +  \int_0^{+\infty}e^{-\lambda x}  \E\left[1_{\{L_\e<x\}}f(x-L_\e)\right]dx \\
 &\qquad \qquad \geq f(0)\int_0^{+\infty}e^{-\lambda x}\left( \Pb\left(S_\e<x\right)- \Pb\left(L_\e<x\right)\right)dx +  \int_0^{+\infty}e^{-\lambda x}  \E\left[1_{\{L_\e<x\}}f(x-L_\e)\right]dx\\
  &\qquad \qquad = f(0)\frac{\E\left[e^{-\lambda S_\e}\right]-\E\left[e^{-\lambda L_\e^+}\right]}{\lambda} +  \int_0^{+\infty}e^{-\lambda x}  \E\left[1_{\{L_\e<x\}}f(x-L_\e)\right]dx.
\end{align*}
 We next decompose the remaining integral according as whether $L_\e< 0$ or $L_{\e}\geq 0$ :
\begin{multline*}
 \int_0^{+\infty}e^{-\lambda x}\E\left[ 1_{\{L_\e< x\}}f(x-L_\e)\right] dx\\= \E\left[e^{-\lambda L_\e}1_{\{L_\e\geq0\}}\right]\mathcal{L}[f](\lambda) +   \int_0^{+\infty}e^{-\lambda x}\E\left[ 1_{\{L_\e< 0\}}f(x-L_\e)\right] dx.
\end{multline*}
Then, the Fubini-Tonelli theorem and a change of variable in the last integral yields :
\begin{align*} \int_0^{+\infty}e^{-\lambda x}  \E\left[1_{\{L_\e<0\}}f(x-L_\e)\right] dx &=  \E\left[1_{\{L_\e<0\}} \int_{-L_\e}^{+\infty} e^{-\lambda z - \lambda L_\e } f(z)dz\right] \\
 &\geq  \Pb\left(L_\e<0\right)\mathcal{L}[f](\lambda) -  \E\left[1_{\{L_\e<0\}} \int_{0}^{-L_\e} e^{-\lambda z  } f(z)dz\right]
\end{align*}
and the result follows by gathering the two previous terms :
$$
 \int_0^{+\infty}  e^{-\lambda x}  \E\left[1_{\{L_\e<0\}}f(x-L_\e)\right]dx
 \geq  \E\left[e^{-\lambda L_\e^+}\right]\mathcal{L}[f](\lambda)  -  \E\left[1_{\{L_\e<0\}} \int_{0}^{-L_\e} e^{-\lambda z  } f(z)dz\right].
$$
It remains to compute the limit $iii)$. Notice first that if $\beta=1$ (and thus $\alpha<1$), then  the process $L$ is a subordinator, hence $L_\e \geq0$ a.s and the expectation is null. We thus assume now that $\beta\in(-1,1)$. Let $\varepsilon>0$ and take $A_\varepsilon$ large enough such that $f(x)\leq \varepsilon$  for $x\geq A_\varepsilon$. 
We decompose :
\begin{align*}
& \frac{1}{ \eta_\alpha(\lambda)} \E\left[1_{\{L_\e<0\}} \int_{0}^{-L_\e} e^{-\lambda z  } f(z)dz\right] \\
&\qquad\qquad \leq   \frac{1}{ \eta_\alpha(\lambda)}\E\left[1_{\{-A_\varepsilon<L_\e<0\}} \int_{0}^{A_\varepsilon} e^{-\lambda z  } f(z)dz\right] +  \frac{\varepsilon}{ \eta_\alpha(\lambda)}\E\left[1_{\{L_\e<-A_\varepsilon\}} \int_{A_\varepsilon}^{-L_\e} e^{-\lambda z  } dz\right]\\
&\qquad\qquad \leq  \frac{A_\varepsilon}{ \eta_\alpha(\lambda)} f(0)+  \frac{\varepsilon }{ \eta_\alpha(\lambda)} \int_{0}^{+\infty} e^{-\lambda z  } \Pb(-L_\e>z)dz \xrightarrow[\lambda\downarrow0]{}  \varepsilon\kappa_{\alpha,-\beta}
\end{align*}
where the limit of the integral follows from (\ref{eq:SeLe}) since $-L$ is a stable L\'evy process with parameter $\alpha$ and $-\beta$. 

\end{proof}

We now apply Lemma \ref{lem:f} to study Equation (\ref{eq:u}).
\subsection{Analysis of Equation (\ref{eq:u}) }

\begin{lemma}\label{lem:t1}
The Laplace transform of $\Phi_0 - \Phi_R$  satisfies the following bounds for $\lambda>0$ :
$$ \frac{\lambda}{1-\E[e^{-\lambda S_\e}]}\mathcal{L}[\Phi_0-\Phi_R](\lambda) \leq 1$$
and
$$
    \frac{\lambda}{1-\E\left[e^{-\lambda L_\e^+}\right]}\mathcal{L}[\Phi_0-\Phi_R](\lambda)
 \geq 1- \lambda \mathcal{L}[u](\lambda) -  \frac{\lambda}{1-\E\left[e^{-\lambda L_\e^+}\right]} \E\left[1_{\{L_\e<0\}} \int_{0}^{-L_\e} e^{-\lambda z  } u(z)dz\right].
$$

\end{lemma}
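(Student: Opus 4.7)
The lemma is essentially a Laplace-transform reformulation of equation (\ref{eq:u}) combined with the inequalities $i)$ and $ii)$ of Lemma \ref{lem:f}, so the plan is quite mechanical. First I would rewrite (\ref{eq:u}) as
\[
\Phi_0(x)-\Phi_R(x) \;=\; \Pb(S_\e\geq x) \;+\; \E\bigl[1_{\{S_\e<x\}} u(x-L_\e)\bigr] \;-\; u(x),
\]
and take its Laplace transform. The first term gives the classical identity
\[
\int_0^{+\infty} e^{-\lambda x}\Pb(S_\e\geq x)\,dx \;=\; \frac{1-\E[e^{-\lambda S_\e}]}{\lambda},
\]
while the third term gives $-\mathcal{L}[u](\lambda)$. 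The middle term is a convolution-type expression that is exactly where Lemma \ref{lem:f} kicks in, applied with $f=u$. Note that $u$ is positive and decreasing (being a tail distribution function) so both $i)$ and $ii)$ apply, and moreover $u(0)=\Pb(\mathbf{M}_{\alpha,\beta}\geq 0)=1$.

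For the upper bound, I apply Lemma \ref{lem:f} $i)$ with $f=u$:
\[
\mathcal{L}[\Phi_0-\Phi_R](\lambda) \;\leq\; \frac{1-\E[e^{-\lambda S_\e}]}{\lambda} \;+\; \E[e^{-\lambda S_\e}]\,\mathcal{L}[u](\lambda) \;-\; \mathcal{L}[u](\lambda) \;=\; \bigl(1-\E[e^{-\lambda S_\e}]\bigr)\Bigl(\tfrac{1}{\lambda}-\mathcal{L}[u](\lambda)\Bigr).
\]
Multiplying by $\lambda/(1-\E[e^{-\lambda S_\e}])$ yields $1-\lambda\mathcal{L}[u](\lambda)\leq 1$, using only that $u\geq 0$.

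For the lower bound, I apply Lemma \ref{lem:f} $ii)$ with $f=u$ and substitute $u(0)=1$; the $u(0)\bigl(\E[e^{-\lambda S_\e}]-\E[e^{-\lambda L_\e^+}]\bigr)/\lambda$ term exactly telescopes with the $(1-\E[e^{-\lambda S_\e}])/\lambda$ coming from $\Pb(S_\e\geq\cdot)$, leaving
\[
\mathcal{L}[\Phi_0-\Phi_R](\lambda) \;\geq\; \bigl(1-\E[e^{-\lambda L_\e^+}]\bigr)\Bigl(\tfrac{1}{\lambda}-\mathcal{L}[u](\lambda)\Bigr) \;-\; \E\Bigl[1_{\{L_\e<0\}}\!\int_0^{-L_\e}\!\!e^{-\lambda z}u(z)\,dz\Bigr].
\]
Multiplying by $\lambda/(1-\E[e^{-\lambda L_\e^+}])$ gives precisely the claimed inequality.

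There is no real obstacle here; the whole proof is an algebraic rearrangement. The one point to verify carefully is that the cancellation in the lower bound really does eliminate the $\E[e^{-\lambda S_\e}]$ term, which hinges on the fortunate coincidence $u(0)=1$. Otherwise, positivity of $u$ handles the upper bound and the monotonicity of $u$ is what legitimises the use of Lemma \ref{lem:f}.
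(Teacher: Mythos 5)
Your proposal is correct and follows essentially the same route as the paper: take the Laplace transform of Equation (\ref{eq:u}), apply Points $i)$ and $ii)$ of Lemma \ref{lem:f} with $f=u$, use $u(0)=1$ to telescope the $\E[e^{-\lambda S_\e}]$ terms, and rearrange. The only cosmetic difference is that you isolate $\Phi_0-\Phi_R$ before transforming rather than after, which changes nothing.
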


\begin{proof}
Taking the Laplace transform of (\ref{eq:u}) and using Point $i)$ of Lemma \ref{lem:f} with $f=u$, we deduce that 
$$
\mathcal{L}[u](\lambda) \leq  \frac{1-\E\left[e^{-\lambda S_\e}\right]}{\lambda}+ \mathcal{L}[u](\lambda)   \E\left[e^{-\lambda S_\e}\right] -  \mathcal{L}[\Phi_0-\Phi_R](\lambda) $$
which yields the upper bound 
$$
   \frac{\lambda}{1-\E\left[e^{-\lambda S_\e}\right]}
\mathcal{L}[\Phi_0-\Phi_R](\lambda) \leq 1 - \lambda\mathcal{L}[u](\lambda)\leq 1.
$$
To get the lower bound, we apply Point $ii)$ of Lemma \ref{lem:f} still with $f=u$. Since $u(0)=1$, this yields
$$
 \mathcal{L}[u](\lambda)+\mathcal{L}[\Phi_0-\Phi_R](\lambda)  \geq  \frac{1-\E\left[e^{-\lambda L_\e^+}\right]}{\lambda}+ \E\left[e^{-\lambda L_\e^+}\right]\mathcal{L}[u](\lambda) - \E\left[1_{\{L_\e<0\}} \int_{0}^{-L_\e} e^{-\lambda z  } u(z)dz\right]$$
which gives the lower bound, after rearranging the terms.

\end{proof}

It remains now to study the limit of both expressions when $\lambda\downarrow0$.

\subsection{Proof of Theorem \ref{theo:1} when $\alpha\leq 1$}
Notice first that from a change of variable and the monotone convergence theorem,
$$\lambda \mathcal{L}[u](\lambda) = \int_0^{+\infty} e^{-z}u\left(\frac{z}{\lambda}\right) dz \xrightarrow[\lambda\downarrow 0]{} 0.$$
Then letting $\lambda\downarrow0$ in Lemma \ref{lem:t1} and using (\ref{eq:SeLe}) and Point $iii)$ in Lemma \ref{lem:f} with $f=u$, we obtain 
\begin{equation}\label{eq:phia-1}
\mathcal{L}[\Phi_0](\lambda)-\mathcal{L}[\Phi_R](\lambda)\equi_{\lambda\downarrow 0} \kappa_{\alpha, \beta}\, \eta_\alpha(\lambda).
\end{equation}
To simplify the notations, we set 
\begin{equation}\label{eq:varphi0}
\varphi_0(x) =  (1-\E[\boldsymbol{p}])u(x) + \frac{\E[\boldsymbol{p}^2-\boldsymbol{p}]}{2} u^2(x)
\end{equation}
so that
$$\mathcal{L}[\Phi_0](\lambda) = \int_0^{+\infty} e^{-\lambda x} \E\left[1_{\{S_\e<x\}} \varphi_0(x-L_\e)\right] dx.$$
Note that the function $\varphi_0$ is positive and decreasing since $\E[\boldsymbol{p}^2-\boldsymbol{p}]>0$, as $\boldsymbol{p}$ is an integer-valued random variable. 
On the one hand, observe that since $\Phi_R\geq0$, we deduce from Lemma \ref{lem:f} with $f=\varphi_0$ that
$$\mathcal{L}[\Phi_0](\lambda)-\mathcal{L}[\Phi_R](\lambda) \leq \mathcal{L}[\varphi_0](\lambda)$$
hence, from (\ref{eq:phia-1}),
$$\kappa_{\alpha, \beta}\leq  \liminf_{\lambda\downarrow 0}   \frac{1}{\eta_\alpha(\lambda)} \mathcal{L}[\varphi_0](\lambda).$$
On the other hand, fix $\varepsilon>0$ small enough. Since $\lim\limits_{x\rightarrow+\infty} u(x)=0$, there exists $A_\varepsilon>0$ such that $u(x)\leq \varepsilon$ for $x\geq A_\varepsilon$. This implies  that there exists $K$, independent of $\varepsilon$, such that  :
$$u^3(x) \leq  \varepsilon K \varphi_0(x)\qquad \text{for }\quad x\geq A_\varepsilon.$$
As a consequence, using (\ref{eq:phiR}) and Point $i)$ of Lemma \ref{lem:f} with $f=u^3$, we deduce
\begin{align*}
\mathcal{L}[\Phi_R](\lambda) &\leq \E\left[\boldsymbol{p}^3\right]\left( \int_0^{A_\varepsilon} e^{-\lambda x} u^{3}(x) dx + \int_{A_\varepsilon}^{+\infty} e^{-\lambda x} u^{3}(x) dx\right)\\
&\leq   \E\left[\boldsymbol{p}^3\right]\left(A_\varepsilon + \varepsilon K\mathcal{L}[\varphi_0](\lambda)\right).
\end{align*} 
Then, using Point $ii)$ of Lemma \ref{lem:f} with $f=\varphi_0$, 
\begin{multline}\label{eq:infLphi0}
\mathcal{L}[\Phi_0](\lambda)-\mathcal{L}[\Phi_R](\lambda) \geq 
\E\left[e^{-\lambda L_\e^+}\right]\mathcal{L}[\varphi_0](\lambda)+ \varphi_0(0)\frac{\E\left[e^{-\lambda S_\e}\right]-\E\left[e^{-\lambda L_\e^+}\right]}{\lambda}\\   - \E\left[1_{\{L_\e<0\}} \int_{0}^{-L_\e} e^{-\lambda z  } \varphi_0(z)dz\right]-   \E\left[\boldsymbol{p}^3\right]\left(A_\varepsilon + \varepsilon K\mathcal{L}[\varphi_0](\lambda)\right).
\end{multline}
Dividing both sides by $\eta_\alpha(\lambda)$ and applying Point $iii)$ of Lemma \ref{lem:f}, we deduce that 
$$\kappa_{\alpha, \beta}\geq   \left(1-  \varepsilon K \E\left[\boldsymbol{p}^3\right] \right)  \limsup_{\lambda\downarrow 0}   \frac{1}{\eta_\alpha(\lambda)} \mathcal{L}[\varphi_0](\lambda). $$
Finally, we have thus proven that
$$\mathcal{L}[\varphi_0](\lambda) \equi_{\lambda\downarrow0} \kappa_{\alpha,\beta} \,\eta_\alpha(\lambda)$$
hence, by the Tauberian theorem, 
$$\int_0^{x} \varphi_0(z) dz  \equi_{x\rightarrow +\infty} \frac{\kappa_{\alpha,\beta}}{\Gamma(2-\alpha)} \eta_\alpha\left(\frac{1}{x}\right).$$
The result now follows by differentiation, since  $\varphi_0$ is decreasing.

 \qed
\section{{The case $1<\alpha<2$}}\label{sec:3}

In this case, the existence of the first moment of $S_\e$ prevents us from using directly the Tauberian theorem as in the previous case. Indeed, when $\alpha\in(1,2)$, letting $\lambda\downarrow 0$ in the first inequality of Lemma \ref{lem:t1}, one obtains~:
$$\limsup_{\lambda\downarrow0} \mathcal{L}[\Phi_0](\lambda)- \mathcal{L}[\Phi_R](\lambda) \leq \E[S_\e].$$
Going back to  (\ref{eq:infLphi0}), this implies that
\begin{equation}\label{eq:varphi0<inf}
 \left(1-  \varepsilon K \E\left[\boldsymbol{p}^3\right] \right) \limsup_{\lambda\downarrow0} \mathcal{L}[\varphi_0](\lambda)\leq (1+\varphi_0(0))\E[S_\e]+  \E[\boldsymbol{p}^3]A_\varepsilon -\E[ 1_{\{L_\e<0\}}L_\e]\varphi_0(0)<+\infty
 \end{equation}
i.e. we can only deduce, by the monotone convergence theorem, that 
\begin{enumerate}[$i)$]
\item if $\E[\boldsymbol{p}]<1$, 
\begin{equation}\label{eq:u<inf}
\int_0^{+\infty}  u(x) dx =\E[{\bf M}_{\alpha, \beta}]<+\infty,
\end{equation}
\item while if  $\E[\boldsymbol{p}]=1$,
$$
\int_0^{+\infty}  u^2(x) dx <+\infty.
$$
\end{enumerate}
\noindent
We shall thus made a technical modification of the previous proof, and write down some new inequalities. 

\subsection{Preliminary lemma}

\begin{lemma}\label{lem:f2}
Assume that $\alpha\in (1,2)$ and let $f:[0,+\infty)\rightarrow [0,+\infty)$ be a positive and decreasing function. We write $xf$ for the function $x\rightarrow xf(x)$. Then, the following inequalities hold :
\begin{enumerate}[$i)$]
 \item Upper bound :
$$ \int_0^{+\infty}  e^{-\lambda x} x \E\left[1_{\{S_\e<x\}}f(x-L_\e)\right] dx \leq  \E\left[e^{-\lambda S_\e}\right] \mathcal{L}[xf](\lambda) +  \E\left[S_\e e^{-\lambda S_\e}\right] \mathcal{L}[f](\lambda). $$
\item Lower bound :
\begin{multline*}
 \int_0^{+\infty}  e^{-\lambda x}  x\E\left[1_{\{S_\e<x\}}f(x-L_\e)\right]dx\\
 \geq   f(0) \int_0^{+\infty}e^{-\lambda x}x\left( \Pb\left(L_\e\geq x\right)- \Pb\left(S_\e\geq x\right)\right) dx + \int_0^{+\infty}e^{-\lambda x}x  \E\left[1_{\{L_\e<x\}}f(x-L_\e)\right]dx 
\end{multline*}
and
\begin{multline*}
\int_0^{+\infty}e^{-\lambda x}x  \E\left[1_{\{L_\e<x\}}f(x-L_\e)\right]dx 
\\\geq \E\left[e^{-\lambda L_\e^+}\right]\mathcal{L}[xf](\lambda) + \E\left[L_\e e^{-\lambda L_\e^+}\right]\mathcal{L}[f](\lambda)- \E\left[1_{\{L_\e<0\}}\int_0^{-L_\e} e^{-\lambda z}zf(z)dz\right] .
\end{multline*}
\item  Assume that $\lim\limits_{x\rightarrow+\infty}f(x) = 0 $,  then
$$\lim_{\lambda\downarrow0}  \lambda^{2-\alpha}  \E\left[1_{\{L_\e<0\}} \int_{0}^{-L_\e} e^{-\lambda z  } zf(z)dz\right]=0.$$
\end{enumerate}
\end{lemma}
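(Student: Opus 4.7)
The three parts of Lemma \ref{lem:f2} are the $\alpha \in (1,2)$ analogues of the three parts of Lemma \ref{lem:f}, with an additional factor of $x$ inside the integral. The extra factor is needed because, when $\alpha \in (1,2)$, the Laplace transforms of the tails of $S_\e$ and $L_\e^+$ remain bounded as $\lambda \downarrow 0$ (since $\E[S_\e], \E[L_\e^+] < \infty$), so the Tauberian analysis of Section \ref{sec:2} needs to be shifted up one order. Pulling an $x$ out of the integrand amounts to differentiating the Laplace transform once, which exposes the next term in the expansion. The plan is therefore to revisit each step of the proof of Lemma \ref{lem:f} with this weight $x$ installed.

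For $i)$, I would follow the argument of Lemma \ref{lem:f} $i)$ verbatim: the a.s. inequality $L_\e \leq S_\e$ and monotonicity of $f$ give $f(x-L_\e) \leq f(x-S_\e)$; decomposing $x = S_\e + (x - S_\e)$ and applying Fubini with the substitution $y = x - S_\e$ separates the integral into the two stated convolution terms. For the first inequality of $ii)$, one writes $1_{\{S_\e < x\}} = 1_{\{L_\e < x\}} - 1_{\{L_\e < x \leq S_\e\}}$ and uses $f(x - L_\e) \leq f(0)$ on the cross term, again as in Lemma \ref{lem:f} $ii)$ but carrying the weight $x$. For the second inequality of $ii)$, I would split on the sign of $L_\e$. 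On $\{L_\e \geq 0\}$, the substitution $y = x - L_\e$ together with the expansion $x = y + L_\e$ produces the leading contributions $\E[1_{\{L_\e\geq 0\}} e^{-\lambda L_\e}] \mathcal{L}[xf](\lambda)$ and $\E[1_{\{L_\e\geq 0\}} L_\e e^{-\lambda L_\e}] \mathcal{L}[f](\lambda)$ directly. On $\{L_\e < 0\}$, the same substitution leaves an outer factor $e^{-\lambda L_\e} \geq 1$ which can be dropped (the remaining integrand $(y + L_\e) f(y)$ being nonnegative on $\{y \geq -L_\e\}$); writing $\int_{-L_\e}^{\infty} = \int_0^{\infty} - \int_0^{-L_\e}$ and discarding the nonnegative byproduct $-\E[1_{\{L_\e<0\}} L_\e \int_0^{-L_\e} e^{-\lambda z} f(z) dz] \geq 0$ collects the remaining pieces into $\Pb(L_\e<0)\mathcal{L}[xf](\lambda) + \E[1_{\{L_\e<0\}}L_\e]\mathcal{L}[f](\lambda)$ minus the stated error integral. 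Summing the two sign contributions yields the inequality.

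For $iii)$, if $\beta = 1$ then $L$ is a subordinator, $L_\e \geq 0$ a.s., and the expectation vanishes. Assume $\beta \in (-1,1)$, fix $\varepsilon > 0$, and choose $A_\varepsilon$ so that $f(z) \leq \varepsilon$ for $z \geq A_\varepsilon$. Splitting the inner integral at $A_\varepsilon$, the part on $[0, A_\varepsilon]$ contributes at most $\frac{A_\varepsilon^2}{2} f(0)$ to the expectation, which vanishes after multiplication by $\lambda^{2-\alpha}$ since $2 - \alpha > 0$. The remaining part is bounded above by $\varepsilon \int_0^{\infty} e^{-\lambda z} z \Pb(-L_\e > z) dz$, and the main obstacle is to prove that this last integral is $O(\lambda^{\alpha - 2})$ as $\lambda \downarrow 0$. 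Integration by parts rewrites it as $\lambda^{-2} \E\bigl[1 - (1 + \lambda (-L_\e)^+) e^{-\lambda (-L_\e)^+}\bigr]$; since $-L$ is strictly $\alpha$-stable with parameters $(\alpha, -\beta)$, so that $\Pb(-L_\e > z) \sim \kappa_{\alpha,-\beta} z^{-\alpha}$ at infinity, a second-order Tauberian estimate (the analogue of (\ref{eq:SeLe}) one order higher) gives $\E\bigl[1 - (1 + \lambda (-L_\e)^+) e^{-\lambda (-L_\e)^+}\bigr] = O(\lambda^\alpha)$, hence the desired $O(\lambda^{\alpha-2})$ bound. Letting $\varepsilon \downarrow 0$ concludes.
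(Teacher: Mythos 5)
Your arguments for $i)$ and the two inequalities of $ii)$ are correct and match the paper's (terse) proof: the paper also cites the decomposition $x = (x-S_\e)+S_\e$ for $i)$ and $x = (x-L_\e)+L_\e$ with a sign split for $ii)$, and your detailed verification — in particular the observation that on $\{L_\e<0\}$ one may drop both $e^{-\lambda L_\e}\geq1$ and the nonnegative byproduct $-\E[1_{\{L_\e<0\}}L_\e\int_0^{-L_\e}e^{-\lambda z}f(z)\,dz]$ — is exactly what is required. For the $\beta\in(-1,1)$ branch of $iii)$ you also follow the paper's decomposition at $A_\varepsilon$; your extra integration by parts of $\int_0^\infty e^{-\lambda z}z\Pb(-L_\e>z)\,dz$ into $\lambda^{-2}\E[1-(1+\lambda(-L_\e)^+)e^{-\lambda(-L_\e)^+}]$ is an unnecessary detour (the paper applies Karamata's Tauberian theorem directly to $z\mapsto z\Pb(-L_\e>z)$), but it is a valid reformulation and leads to the same $O(\lambda^{\alpha-2})$ bound.

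There is, however, a genuine error in your treatment of the $\beta=1$ case of $iii)$. You assert that ``if $\beta=1$ then $L$ is a subordinator, $L_\e\geq0$ a.s., and the expectation vanishes''; this reasoning is borrowed from Lemma~\ref{lem:f}, where it is valid because there $\alpha<1$. But in Lemma~\ref{lem:f2} we have $\alpha\in(1,2)$, and a strictly $\alpha$-stable process with $\alpha>1$ and $\beta=1$ is spectrally positive, not increasing: it takes negative values with positive probability, so the indicator $1_{\{L_\e<0\}}$ is not a.s.\ zero and the expectation does not vanish identically. The correct observation (which the paper uses) is that when $\beta=1$ and $\alpha>1$ the negative tail of $L$ decays super-polynomially, so $(-L_\e)^+$ has all (indeed exponential) moments; hence
$$\lambda^{2-\alpha}\,\E\!\left[1_{\{L_\e<0\}}\int_0^{-L_\e}e^{-\lambda z}zf(z)\,dz\right]\leq \lambda^{2-\alpha}\,f(0)\,\E\!\left[1_{\{L_\e<0\}}L_\e^2\right]\xrightarrow[\lambda\downarrow0]{}0,$$
since $\E[1_{\{L_\e<0\}}L_\e^2]<\infty$ and $2-\alpha>0$. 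With this fix the remainder of your argument goes through.
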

\begin{proof}
The proof of Lemma \ref{lem:f2} is similar to that of Lemma \ref{lem:f}. Point $i)$ follows from the decomposition $x=x-S_\e+S_\e$, using the fact that $f$ is decreasing and computing the Laplace transforms of the convolution products. Point $ii)$ follows similarly from the decomposition $x=x-L_\e +L_\e$, by separating the case $L_\e< 0$ and $L_\e\geq 0$. Finally, for Point $iii)$, observe first that if $\beta=1$, then  the random variable $(-L_\e)^+$ admits exponential moments, hence
$$ \lambda^{2-\alpha} \E\left[1_{\{L_\e<0\}} \int_{0}^{-L_\e} e^{-\lambda z  } zf(z)dz\right] \leq  \lambda^{2-\alpha} f(0) \E\left[ 1_{\{L_\e<0\}}L_\e^2 \right] 
\xrightarrow[\lambda\downarrow0]{} 0.$$
Take now $\beta\in(-1,1)$ and let $\varepsilon>0$. By assumption, there exists $A_\varepsilon>0$ such that $f(x)\leq \varepsilon$  for $x\geq A_\varepsilon$. We then decompose :
\begin{align*}
& \lambda^{2-\alpha} \E\left[1_{\{L_\e<0\}} \int_{0}^{-L_\e} e^{-\lambda z  } zf(z)dz\right] \\
&\qquad \leq   \lambda^{2-\alpha}\E\left[1_{\{-A_\varepsilon<L_\e<0\}} \int_{0}^{A_\varepsilon} e^{-\lambda z  } zf(z)dz\right] + \varepsilon  \lambda^{2-\alpha}\E\left[1_{\{L_\e<-A_\varepsilon\}} \int_{A_\varepsilon}^{-L_\e} e^{-\lambda z  } z dz\right]\\
&\qquad \leq 
 \lambda^{2-\alpha}  A_\varepsilon^2\,f(0)  + \varepsilon  \lambda^{2-\alpha} \int_{0}^{+\infty} e^{-\lambda z  } z\Pb(-L_\e>z)dz\xrightarrow[\lambda\downarrow0]{}  \varepsilon\kappa_{\alpha,-\beta} 
\end{align*}
where the limit of the integral follows as before from (\ref{eq:SeLe}) and the Tauberian theorem. 

\end{proof}

We now apply Lemma \ref{lem:f2} to the equation (\ref{eq:u}) satisfied  by $u$.

\subsection{Analysis of Equation (\ref{eq:u}) in the case $\alpha\in(1,2)$}

\begin{lemma}\label{lem:a+1}
The Laplace transform of $x(\Phi_0-\Phi_R)$ satisfies the following bounds :
$$
\frac{\lambda^2}{1-\E\left[e^{-\lambda S_\e}\right] - \lambda \E\left[S_\e e^{-\lambda S_\e}\right]} \mathcal{L}[x(\Phi_0-\Phi_R)](\lambda)
 \leq 1+ \frac{\lambda^2\, \E\left[S_\e e^{-\lambda S_\e}\right]}{1-\E\left[e^{-\lambda S_\e}\right] - \lambda \E\left[S_\e e^{-\lambda S_\e}\right]} \mathcal{L}[u](\lambda)
$$
and
\begin{multline*}
\frac{\lambda^2}{1-\E\left[e^{-\lambda L_\e^+}\right] - \lambda \E\left[L_\e^+ e^{-\lambda L_\e^+}\right]}\mathcal{L}[x(\Phi_0-\Phi_R)](\lambda)\\ \geq 1-\lambda^2 \mathcal{L}[xu](\lambda)  - \frac{\lambda^2\, \Xi(\lambda) }{1-\E\left[e^{-\lambda L_\e^+}\right] - \lambda \E\left[L_\e^+ e^{-\lambda L_\e^+}\right]}  
\end{multline*}
where 
$$
\Xi(\lambda) =  \E\left[1_{\{L_\e<0\}}\int_0^{-L_\e} e^{-\lambda z}zu(z)dz\right]+ \E\left[1_{\{L_\e<0\}}(-L_\e)\right]   \mathcal{L}[u](\lambda).
$$
\end{lemma}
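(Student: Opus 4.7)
The plan is to parallel the proof of Lemma~\ref{lem:t1}, but to first multiply the integral equation (\ref{eq:u}) by $x$ before taking Laplace transforms, and to apply Lemma~\ref{lem:f2} (with $f = u$) in place of Lemma~\ref{lem:f}. The required input for the linear term is the elementary Fubini identity
$$\int_0^{+\infty} e^{-\lambda x} x\,\Pb(S_\e \geq x)\,dx \;=\; \frac{1-\E[e^{-\lambda S_\e}]}{\lambda^2} - \frac{\E[S_\e e^{-\lambda S_\e}]}{\lambda},$$
together with its analogue with $S_\e$ replaced by $L_\e^+$. Setting $D_S(\lambda) := 1-\E[e^{-\lambda S_\e}] - \lambda\E[S_\e e^{-\lambda S_\e}]$ and $D_L(\lambda) := 1-\E[e^{-\lambda L_\e^+}] - \lambda\E[L_\e^+ e^{-\lambda L_\e^+}]$, these two quantities are $\lambda^2$ times the integral above, and both are nonnegative since $h(y) := 1-(1+y)e^{-y} \geq 0$ on $[0,+\infty)$; they are precisely the denominators appearing in the statement.

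For the upper bound, Point~$i)$ of Lemma~\ref{lem:f2} applied with $f = u$ gives, after multiplication by $\lambda^2$,
$$\lambda^2 \mathcal{L}[x(\Phi_0-\Phi_R)](\lambda) \;\leq\; D_S(\lambda) - \lambda^2\bigl(1-\E[e^{-\lambda S_\e}]\bigr)\mathcal{L}[xu](\lambda) + \lambda^2\E[S_\e e^{-\lambda S_\e}]\mathcal{L}[u](\lambda).$$
The $\mathcal{L}[xu]$ term is nonpositive and can be dropped; dividing by $D_S(\lambda)$ yields the claimed upper bound.

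For the lower bound, Point~$ii)$ of Lemma~\ref{lem:f2} applied with $f = u$ (and $u(0)=1$) produces a term $A_L(\lambda) - A(\lambda)$ from the first integral whose $A(\lambda)$ contribution cancels the one coming from the linear term, leaving
$$\lambda^2 \mathcal{L}[x(\Phi_0-\Phi_R)](\lambda) \;\geq\; D_L(\lambda) - \lambda^2\bigl(1-\E[e^{-\lambda L_\e^+}]\bigr)\mathcal{L}[xu](\lambda) + \lambda^2\E[L_\e e^{-\lambda L_\e^+}]\mathcal{L}[u](\lambda) - \lambda^2 \E\left[1_{\{L_\e<0\}}\int_0^{-L_\e} e^{-\lambda z} z u(z)\,dz\right].$$
The two algebraic identities $1-\E[e^{-\lambda L_\e^+}] = D_L(\lambda) + \lambda\E[L_\e^+ e^{-\lambda L_\e^+}]$ and $\E[L_\e e^{-\lambda L_\e^+}] = \E[L_\e^+ e^{-\lambda L_\e^+}] - \E[(-L_\e)1_{\{L_\e<0\}}]$ then reorganize the right-hand side as
$$D_L(\lambda)\bigl(1-\lambda^2\mathcal{L}[xu](\lambda)\bigr) + \lambda^2\E[L_\e^+ e^{-\lambda L_\e^+}]\bigl(\mathcal{L}[u](\lambda) - \lambda\mathcal{L}[xu](\lambda)\bigr) - \lambda^2 \Xi(\lambda),$$
and the desired inequality follows upon dropping the middle summand and dividing by $D_L(\lambda)$.

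The main obstacle is justifying the discarding of that middle summand, i.e.\ proving $\mathcal{L}[u](\lambda) \geq \lambda\mathcal{L}[xu](\lambda)$, or equivalently $\int_0^{+\infty} e^{-\lambda x}(1-\lambda x) u(x)\,dx \geq 0$. This is a standard consequence of the monotonicity of $u$: the signed weight $e^{-\lambda x}(1-\lambda x)$ has zero integral over $[0,+\infty)$ and changes sign exactly at the pivot $x = 1/\lambda$, so since $u(x) \geq u(1/\lambda)$ for $x \leq 1/\lambda$ (where the weight is nonnegative) and $u(x) \leq u(1/\lambda)$ for $x \geq 1/\lambda$ (where it is nonpositive), we obtain $e^{-\lambda x}(1-\lambda x)(u(x) - u(1/\lambda)) \geq 0$ pointwise, and integrating gives the claim. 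Everything else is algebraic bookkeeping designed to produce the single composite quantity $\Xi(\lambda)$.
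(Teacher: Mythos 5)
Your proof is correct and follows essentially the same route as the paper: multiply \eqref{eq:u} by $x$, compute $\int_0^\infty e^{-\lambda x}x\,\Pb(S_\e\geq x)\,dx$ explicitly, apply Points $i)$ and $ii)$ of Lemma \ref{lem:f2} with $f=u$, and rearrange so that the discarded terms have the right sign. The only (harmless) deviation is your justification of $\mathcal{L}[u](\lambda)\geq \lambda\mathcal{L}[xu](\lambda)$ by a monotonicity/rearrangement argument, where the paper simply integrates by parts to get $\mathcal{L}[u](\lambda)-\lambda\mathcal{L}[xu](\lambda)=\E[{\bf M}_{\alpha,\beta}e^{-\lambda {\bf M}_{\alpha,\beta}}]\geq 0$.
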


\begin{proof}
We first multiply (\ref{eq:u}) by $x$ before taking the Laplace transform of both sides:
$$ \mathcal{L}[xu](\lambda)= 
  \int_0^{+\infty}e^{-\lambda x}x\Pb\left(S_\e\geq x\right)dx +\int_0^{+\infty}e^{-\lambda x}x  \E\left[1_{\{S_\e<x\}}u(x-L_\e)\right]dx - \mathcal{L}[x(\Phi_0-\Phi_R)](\lambda).
$$
Integrating by parts the first term on the right-hand side, we have 
\begin{equation}\label{eq:xP}
 \int_0^{+\infty}e^{-\lambda x}x\Pb\left(S_\e\geq x\right)dx=    \frac{1-\E\left[e^{-\lambda S_\e}\right] - \lambda \E\left[S_\e e^{-\lambda S_\e}\right]}{\lambda^2}\equi_{\lambda\downarrow0} \kappa_{\alpha, \beta} \Gamma(2-\alpha) \lambda^{\alpha-2}  
 \end{equation}
from (\ref{eq:SeLe}) and the Tauberian theorem.
To get the upper bound, we apply Point $i)$ of Lemma \ref{lem:f2} with $f=u$ :
\begin{multline*}
\mathcal{L}[x(\Phi_0-\Phi_R)](\lambda)+ \mathcal{L}[xu](\lambda) \\
 \leq \frac{1-\E\left[e^{-\lambda S_\e}\right]-\lambda\E\left[S_\e e^{-\lambda S_\e}\right]}{\lambda^2} +\E\left[e^{-\lambda S_\e}\right]\mathcal{L}[xu](\lambda) +\E\left[S_\e\, e^{-\lambda S_\e}\right]\mathcal{L}[u](\lambda).
 \end{multline*}
Adding $\lambda \E[S_\e e^{-\lambda S_\e}] \mathcal{L}[xu](\lambda)$ on the right-hand side and rearranging the terms  yields the announced upper bound.\\

\noindent
Similarly, to get the lower bound, we apply Point $ii)$ of Lemma \ref{lem:f2} with $f=u$ :
\begin{multline*}
\mathcal{L}[x(\Phi_0-\Phi_R)](\lambda)+ \mathcal{L}[xu](\lambda) \\
 \geq \frac{1-\E\left[e^{-\lambda L_\e^+}\right]-\lambda\E\left[L_\e^+ e^{-\lambda L_\e^+}\right]}{\lambda^2} +\E\left[e^{-\lambda L_\e^+}\right]\mathcal{L}[xu](\lambda) \\
 +\E\left[L_\e\, e^{-\lambda L_\e^+}\right]\mathcal{L}[u](\lambda)
  - \E\left[1_{\{L_\e<0\}} \int_0^{-L_\e} e^{-\lambda z}z u(z) dz  \right] 
 \end{multline*}
Adding $\lambda  \E\left[L_\e^+ e^{-\lambda L_\e^+}\right] \mathcal{L}[xu](\lambda)$ in both sides and rearranging the terms, we obtain 
\begin{multline*}
\frac{\lambda^2}{1-\E\left[e^{-\lambda L_\e^+}\right] - \lambda \E\left[L_\e^+ e^{-\lambda L_\e^+}\right]}\mathcal{L}[x(\Phi_0-\Phi_R)](\lambda)\\
 \geq 1- \lambda^2 \mathcal{L}[xu](\lambda) - \frac{\lambda^2 }{1-\E\left[e^{-\lambda L_\e^+}\right] - \lambda \E\left[L_\e^+ e^{-\lambda L_\e^+}\right]}R(\lambda) 
\end{multline*}
where the remainder $R(\lambda)$ is given by :
\begin{multline*}
R(\lambda)=\E\left[1_{\{L_\e<0\}} \int_0^{-L_\e} e^{-\lambda z}z u(z) dz  \right] -  \E\left[1_{\{L_\e<0\}}L_\e\right]\mathcal{L}[u](\lambda)\\+  \E\left[L_\e^+ e^{-\lambda L_\e^+}\right]\left(\lambda \mathcal{L}[xu](\lambda)-\mathcal{L}[u](\lambda)\right). 
\end{multline*}
Note that the last term on the right-hand side is negative and may thus be removed since, integrating by parts, 
$$\mathcal{L}[u](\lambda) - \lambda \mathcal{L}[xu](\lambda)= \E\left[{\bf M}_{\alpha,\beta}e^{-\lambda {\bf M}_{\alpha, \beta}}\right]\geq0,$$
hence  $R(\lambda)\leq \Xi(\lambda)$ which proves the lower bound.

\end{proof}

\subsection{Proof of Theorem \ref{theo:1} when $\alpha>1$}

We now want to let $\lambda\downarrow0$ in Lemma \ref{lem:a+1}. Notice first that, thanks to (\ref{eq:SeLe}), the first terms on the left-hand side of both inequalities will converge towards the same quantity, which is given by (\ref{eq:xP}).
 Also, by the monotone convergence theorem
$$\lambda^2 \mathcal{L}[xu](\lambda) = \int_0^{+\infty} e^{-z} z u\left(\frac{z}{\lambda}\right) dz \xrightarrow[\lambda\downarrow0]{}0$$
and, applying Point $iii)$ of Lemma \ref{lem:f2} with $f=u$, 
$$\lim_{\lambda\downarrow0}  \lambda^{2-\alpha}  \E\left[1_{\{L_\e<0\}} \int_{0}^{-L_\e} e^{-\lambda z  } z u(z)dz\right]=0.$$
As a consequence, it only remains to show that 
\begin{equation}\label{eq:lu=0}
\lim\limits_{\lambda\downarrow0}\lambda^{2-\alpha} \mathcal{L}[u](\lambda) =0.
\end{equation}
When $\E[\boldsymbol{p}]<1$, this is a direct consequence of (\ref{eq:u<inf}) since  $ \mathcal{L}[u](\lambda) \leq \E[{\bf M}_{\alpha,\beta}]<+\infty$. The situation is trickier when $\E[\boldsymbol{p}]=1$, and we shall rely on the following Lemma, which gives an a priori bound on $u$.

\begin{lemma}\label{lem:M}
Assume that $\alpha>1$ and $\E[\boldsymbol{p}]=1$. Then,  there exists a constant $C>0$ such that for $x$ large enough, 
$$\Pb\left({\bf M}_{\alpha,\beta}\geq x\right) \leq C x^{-\alpha/2}.$$
\end{lemma}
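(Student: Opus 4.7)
The starting point is the $L^2$-integrability $\int_0^{+\infty} u^2(x)\,dx < +\infty$, which is provided by (\ref{eq:varphi0<inf}) in the critical case $\E[\boldsymbol{p}]=1$ (since then $\varphi_0 = \frac{\sigma^2}{2}u^2$). Combined with the monotonicity of $u$, this immediately gives the crude estimate
$$\tfrac{x}{2}u^2(x) \leq \int_{x/2}^{x} u^2(t)\,dt \xrightarrow[x\to\infty]{} 0,$$
so $u(x) = o(x^{-1/2})$.

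To upgrade this crude bound to the asserted $u(x) = O(x^{-\alpha/2})$, I would bootstrap through the integral equation (\ref{eq:u}). In the critical case, rearranging gives
\begin{equation*}
\frac{\sigma^2}{2}\E\bigl[1_{\{S_\e<x\}}u^2(x-L_\e)\bigr] = (1-u(x))\Pb(S_\e\geq x) + \E\bigl[1_{\{S_\e<x\}}(u(x-L_\e)-u(x))\bigr] + \Phi_R(x).
\end{equation*}
On the right-hand side: $(1-u(x))\Pb(S_\e\geq x) = O(x^{-\alpha})$ by (\ref{eq:asympLS}); since $u \to 0$, one can choose $A$ with $\E[\boldsymbol{p}^3]u(A)\leq \sigma^2/4$ and split the integrand bounding $\Phi_R$ on $\{x-L_\e \geq A\}$ versus its complement, which absorbs half of $\Phi_R$ into the left-hand side modulo an $O(x^{-\alpha})$ remainder; and the transport term is rewritten by Fubini as $\int_0^x (-u'(t))\Pb(L_\e \geq x-t)\,dt$ and controlled via the tail $\Pb(L_\e\geq s)\leq Cs^{-\alpha}$. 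On the left-hand side, the monotonicity of $u$ together with $\Pb(L_\e \leq 0) > 0$ (valid since the strictly $\alpha$-stable process with $\alpha>1$ is centered) yields the lower bound $\E[1_{\{S_\e<x\}}u^2(x-L_\e)] \geq c\,u^2(x)$ for $x$ large. Combined, these steps produce a pointwise inequality of the form $u^2(x) \leq C\bigl(x^{-\alpha} + \text{correction involving }u\text{ at scale }x/2\bigr)$.

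The main obstacle is that a crude choice of threshold in the Fubini integral only gives $u^2(x) \leq C(x^{-\alpha} + u(x/2))$, whose naive iteration degrades the exponent (taking a square root of $u(x/2)$ is counter-productive). To close the bootstrap, I would introduce an auxiliary scale $r = r(x)\to+\infty$, split the Fubini integral at $t = x-r$, and perform an integration by parts on $[x/2, x-r]$ exploiting the precise tail $\Pb(L_\e \geq s) \sim \kappa_{\alpha,\beta}s^{-\alpha}$. Choosing $r = x^\delta$ with $\delta$ depending on the current best exponent on $u$ produces a self-improving recursion, and after finitely many iterations the exponent saturates at $\alpha/2$, giving the desired bound $u(x) \leq Cx^{-\alpha/2}$ for $x$ large enough.
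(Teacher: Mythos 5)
Your approach is genuinely different from the paper's, and the decisive step is missing. The paper does not extract Lemma \ref{lem:M} from the integral equation at all: it splits the event $\{{\bf M}_{\alpha,\beta}\ge x\}$ according to whether level $x$ is first reached before or after a time $t$. The late contribution is bounded by the survival probability $\Pb(Z(t)\ge 1)\sim 2/(\sigma^2 t)$ (Kolmogorov's estimate), and the early one by a stopping-time/first-moment argument: conditionally on hitting $x$ before time $t$, the expected number of particles above $x$ at time $t$ is at least $\rho\ge 1-1/\alpha>0$, whence $\Pb(\underline{\bf M}_{\alpha,\beta}^{(t)}\ge x)\le\frac{\alpha}{\alpha-1}\E[Z(t)]\Pb(L_t\ge x)=\frac{\alpha}{\alpha-1}\Pb(t^{1/\alpha}L_1\ge x)$. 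Choosing $t=x^{\alpha/2}$ balances the two terms at order $x^{-\alpha/2}$; this balance is precisely where the exponent $\alpha/2$ comes from, and your analytic route has no comparable mechanism.

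Concretely, the gap in your bootstrap is the transport term. The rearrangement of (\ref{eq:u}), the absorption of $\Phi_R$, and the lower bound $\E[1_{\{S_\e<x\}}u^2(x-L_\e)]\ge c\,u^2(x)$ are all fine (though for the latter you need $\Pb(L_\e\ge 0)>0$ rather than $\Pb(L_\e\le 0)>0$: monotonicity gives $u(x-L_\e)\ge u(x)$ on $\{L_\e\ge 0\}$). But writing the transport term as $\int_{(0,x]}\Pb(L_\e\ge x-t)\,\mu(dt)$ with $\mu=-du$, the contribution of $t\in(x-r,x]$ is bounded only by $u(x-r)-u(x)$, and with nothing beyond monotonicity and an induction hypothesis $u(y)\le Cy^{-\theta}$ this difference can only be estimated pointwise by $u(x-r)\asymp x^{-\theta}$: there is no a priori modulus of continuity for $u$. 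Feeding $x^{-\theta}$ back into $u^2(x)\le C(\cdots)$ returns the exponent $\theta/2$, so the recursion degrades rather than self-improves, and the auxiliary scale $r=x^\delta$ does not repair this because the near-diagonal term is insensitive to $r$. Even averaging the inequality over $x\in[X,2X]$ to tame $u(x-r)-u(x)$ and optimizing in $r$ leads to a fixed point $\alpha/(\alpha+1)$, which is strictly less than $\alpha/2$ for $\alpha>1$. Closing your argument would require an additional Lipschitz-type regularity input on $u$, which is essentially what the lemma is meant to supply; the paper's probabilistic two-scale argument avoids the issue entirely.
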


\begin{proof}
We mimic the arguments of \cite{LaSh2}. Recall that since $\alpha>1$, the positivity parameter $\rho = \Pb(L_1\geq 0)$ of $L$ belongs to the interval $[1-\frac{1}{\alpha}, \, \frac{1}{\alpha}]$. Denote by $\underline{\bf M}_{\alpha,\beta}^{(t)}$ the maximum of the branching stable process on the interval $[0,t]$ and by $\overline{\bf M}_{\alpha,\beta}^{(t)}$ the maximum of the branching stable process on the interval $[t,+\infty]$. From the construction given in (\ref{eq:Z}), we first deduce that 
\begin{align*}
\Pb\left({\bf M}_{\alpha,\beta}\geq x\right) &\leq \Pb\left(\underline{\bf M}_{\alpha,\beta}^{(t)}\geq x\right)+\Pb\left(\overline{\bf M}_{\alpha,\beta}^{(t)}\geq x\right)\\
&\leq \Pb\left(\underline{\bf M}_{\alpha,\beta}^{(t)}\geq x\right) + \Pb\left(Z(t)\geq 1\right).
\end{align*}
Let us now denote by $T_x$ the first time at which a particle of the branching process reaches the level $x$.
Then, by applying the Markov property at $T_x$, we deduce that conditionally to $\{T_x\leq t\} =\{\underline{\bf M}_{\alpha,\beta}^{(t)}\geq x\} $, the expected number of particles above $x$ at time $t$ is greater than $\rho$ :
$$\E\left[\sum_{i=1}^{Z(t)}  1_{\{L_t^{(i)}\geq x\}} \bigg| \underline{\bf M}_{\alpha,\beta}^{(t)}\geq x\right]\geq \rho\geq 1-\frac{1}{\alpha}$$
which implies that
\begin{align*}
\Pb\left(\underline{\bf M}_{\alpha,\beta}^{(t)}\geq x\right)&\leq \frac{\alpha}{\alpha-1} \E\left[\sum_{i=1}^{Z(t)}  1_{\{L_t^{(i)}\geq x\}} \right]\leq \frac{\alpha}{\alpha-1} \E[Z(t)]\Pb\left(L_t\geq x\right)= \frac{\alpha}{\alpha-1} \Pb\left( t^{1/\alpha}L_1\geq x\right)
\end{align*}
since $Z$ is independent from the positions $(L^{(i)})$, and $\E[Z(t)]=1$ for all $t\geq0$. Taking $t=x^{\alpha/2}$, we have thus proven that 
$$\Pb\left({\bf M}_{\alpha,\beta}\geq x\right)\leq  \frac{\alpha}{\alpha-1} \Pb\left(L_1\geq \sqrt{x}\right) +  \Pb\left(Z(x^{\alpha/2})\geq 1\right)
$$
and the result follows by using  (\ref{eq:SeLe}) and the Kolmogorov's theorem, which states that
$$\Pb\left(Z(t)\geq 1\right) \equi_{t\rightarrow+\infty} \frac{2}{\sigma^2 t}$$
 see for instance Asmussen \& Hering \cite[Theorem 2.6]{AsHe}.

\end{proof}

We now come back to the limit (\ref{eq:lu=0}). Applying Lemma \ref{lem:M} and the Tauberian theorem, we deduce that for $\lambda$ small enough, there exists a constant $C>0$ such that 
$$
\lambda^{2-\alpha}\mathcal{L}[u](\lambda) \leq C \lambda^{1-\frac{\alpha}{2}} \xrightarrow[\lambda\downarrow0]{}0
$$
since $\alpha\in(1,2)$. As a consequence, by letting $\lambda\downarrow0$ in Lemma \ref{lem:a+1}, we obtain the asymptotics
$$ \mathcal{L}[x\Phi_0](\lambda)-\mathcal{L}[x\Phi_R](\lambda) \equi_{\lambda\downarrow0} \kappa_{\alpha, \beta}\Gamma(2-\alpha)\lambda^{\alpha-2}.$$
The remainder of the proof is now similar to the case $\alpha \in (0,1]$. First, applying Point $i)$ of Lemma \ref{lem:f2} with $f=\varphi_0$ and using that $\Phi_R\geq0$, we deduce that 
$$\mathcal{L}[x\Phi_0](\lambda)-\mathcal{L}[x\Phi_R](\lambda) \leq \mathcal{L}[x\varphi_0](\lambda)+  \E[S_\e] \mathcal{L}[\varphi_0](\lambda)
$$
which implies the  lower bound
$$\liminf_{\lambda\downarrow 0} \lambda^{2-\alpha}\mathcal{L}[x\varphi_0](\lambda) \geq \kappa_{\alpha, \beta} \Gamma(2-\alpha). $$

\noindent
Next, we deduce from Point $ii)$ of Lemma \ref{lem:f2} with $f=\varphi_0$ that
\begin{multline*}
\mathcal{L}[x\Phi_0](\lambda)\\\geq  \varphi_0(0) \int_0^{+\infty}e^{-\lambda x}x\left( \Pb\left(L_\e\geq x\right)- \Pb\left(S_\e\geq x\right)\right) dx +  \int_0^{+\infty}e^{-\lambda x}x  \E\left[1_{\{L_\e<x\}}\varphi_0(x-L_\e)\right]dx 
\end{multline*}
where 
\begin{multline*}
\int_0^{+\infty}e^{-\lambda x}x  \E\left[1_{\{L_\e<x\}}\varphi_0(x-L_\e)\right]dx 
\\\geq \E\left[e^{-\lambda L_\e^+}\right]\mathcal{L}[x\varphi_0](\lambda) - \E\left[1_{\{L_\e<0\}}\int_0^{-L_\e} e^{-\lambda z}z\varphi_0(z)dz\right] + \E\left[L_\e e^{-\lambda L_\e^+}\right]\mathcal{L}[\varphi_0](\lambda) .
\end{multline*}
Take $\varepsilon>0$. As before, since $\lim\limits_{x\rightarrow+\infty} u(x)=0$, there exists $A_\varepsilon>0$ and $K>0$  such that 
$$u(x)\leq \varepsilon\qquad \text{and}\qquad u^3(x) \leq  \varepsilon K \varphi_0(x)\qquad \text{for }\quad x\geq A_\varepsilon.$$
As a consequence, using (\ref{eq:phiR}) and Point $i)$ of Lemma \ref{lem:f2} with $f=u^3$, we deduce
\begin{align*}
\mathcal{L}[x\Phi_R](\lambda) &\leq \E\left[\boldsymbol{p}^3\right] \left( \mathcal{L}[xu^3](\lambda) +  \E\left[S_\e\right] \mathcal{L}[u^3](\lambda)\right) \\
&\leq   \E\left[\boldsymbol{p}^3\right]\left(A^2_\varepsilon + \E\left[S_\e\right] A_\varepsilon + \varepsilon K\left(\mathcal{L}[x\varphi_0](\lambda) +\E\left[S_\e\right]\mathcal{L}[\varphi_0](\lambda) \right) \right)\\
&\leq C (1 + \varepsilon \mathcal{L}[x\varphi_0](\lambda))
\end{align*} 
for some constant $C$ large enough since $\mathcal{L}[\varphi_0](\lambda) \leq \mathcal{L}[\varphi_0](0)<+\infty$ from (\ref{eq:varphi0<inf}).
Then, using Point $iii)$ of Lemma \ref{lem:f2} with $f=\varphi_0$, we obtain
$$(1- \varepsilon C ) \limsup_{\lambda\downarrow0} \mathcal{L}[x\varphi_0](\lambda)\leq  \kappa_{\alpha, \beta} \Gamma(2-\alpha)$$
which implies that
$$\mathcal{L}[x\varphi_0](\lambda) \equi_{\lambda\downarrow0}\kappa_{\alpha, \beta} \Gamma(2-\alpha) \lambda^{\alpha-2}.$$
Finally, by the Tauberian theorem, 
$$\int_0^{x} x\varphi_0(z) dz  \equi_{x\rightarrow +\infty} \frac{\kappa_{\alpha,\beta}}{2-\alpha} x^{2-\alpha} $$
and the result now follows by differentiation, since  $\varphi_0$ is decreasing, see Lemma \ref{lem:app} in the Appendix.\\

\qed

\section{Appendix}

We briefly prove the following lemma, which allows to differentiate an asymptotics :

\begin{lemma}\label{lem:app}
Assume that $u$ is a positive and decreasing function and $\gamma\in(0,1)$. Then, for any $\xi\geq0$ :
$$\int_0^{x} z^\xi u(z) dz \equi_{x\rightarrow +\infty} x^\gamma \qquad \Longleftrightarrow \qquad  u(x) \equi_{x\rightarrow +\infty} \gamma x^{\gamma-1-\xi}$$
\end{lemma}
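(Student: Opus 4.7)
The plan is to argue the two implications separately, with the reverse direction being a standard integration and the forward direction requiring a monotone-density sandwich on the interval $[x,(1+\delta)x]$.

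For the direction $(\Leftarrow)$, I would simply multiply the asymptotic $u(z)\sim\gamma z^{\gamma-1-\xi}$ by $z^\xi$ to get $z^\xi u(z)\sim\gamma z^{\gamma-1}$, an integrable function near infinity whose primitive $z^\gamma$ matches the claimed bound. Because $\gamma-1\in(-1,0)$, the integral from $0$ is well defined, so standard integration of asymptotic equivalents yields $\int_0^x z^\xi u(z)\,dz\sim x^\gamma$.

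For the harder direction $(\Rightarrow)$, set $F(x)=\int_0^x z^\xi u(z)\,dz$ and fix $\delta>0$. Since $u$ is decreasing and $z\mapsto z^\xi$ is increasing on $[x,(1+\delta)x]$, I sandwich
\begin{equation*}
\delta\,x^{1+\xi}\,u\bigl((1+\delta)x\bigr)\;\leq\;F\bigl((1+\delta)x\bigr)-F(x)\;\leq\;\delta\,(1+\delta)^\xi\,x^{1+\xi}\,u(x).
\end{equation*}
Using $F(y)\sim y^\gamma$, the middle difference is asymptotic to $\bigl((1+\delta)^\gamma-1\bigr)x^\gamma$. Dividing through by $\delta x^{1+\xi}$ and letting $x\to\infty$, the right-hand inequality gives
\begin{equation*}
\liminf_{x\to\infty} x^{1+\xi-\gamma}u(x)\;\geq\;\frac{(1+\delta)^\gamma-1}{\delta\,(1+\delta)^\xi},
\end{equation*}
while the left-hand inequality, after the substitution $y=(1+\delta)x$, produces
\begin{equation*}
\limsup_{y\to\infty} y^{1+\xi-\gamma}u(y)\;\leq\;\frac{(1+\delta)^\gamma-1}{\delta}\,(1+\delta)^{1+\xi-\gamma}.
\end{equation*}
Letting $\delta\downarrow0$ in both bounds yields the common limit $\gamma$ by L'Hopital (or by $\frac{d}{d\delta}(1+\delta)^\gamma|_{\delta=0}=\gamma$), so $u(x)\sim\gamma x^{\gamma-1-\xi}$.

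I expect no serious obstacle: the only delicate point is simultaneously handling the increasing factor $z^\xi$ and the decreasing $u(z)$ on $[x,(1+\delta)x]$, but since $(1+\delta)^\xi\to 1$ as $\delta\to0$, this extra factor disappears in the limit and does not affect the final constant.
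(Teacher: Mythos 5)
Your proof is correct and follows essentially the same monotone-density argument as the paper: both bound the increment of $\int_0^{\cdot} z^\xi u(z)\,dz$ over an interval of length proportional to $x$ using the monotonicity of $u$, then send the proportionality constant to zero after $x\to\infty$. The only cosmetic difference is that the paper couples the interval length to the error $\varepsilon$ in the asymptotic via $h=\sqrt{\varepsilon}\,x$, whereas you take the standard two-step limit $x\to\infty$ followed by $\delta\downarrow0$.
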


\begin{proof}
We only prove the implication $\Longrightarrow$ as its converse is classic. Since $u$ is decreasing, we have for $h>0$ :
$$\int_x^{x+h} z^\xi u(z)dz \leq  h (x+h)^\xi u(x).$$
Fix $\varepsilon>0$. Assuming that $x$ is large enough, we deduce from the assumption that 
$$  (x+h)^\gamma-x^\gamma -2\varepsilon((x+h)^\gamma + x^\gamma) \leq h (x+h)^\xi u(x).$$
Dividing both sides by $h$, we further obtain since $\gamma\in(0,1)$ :
$$\gamma (x+h)^{\gamma-1}  -\frac{2\varepsilon((x+h)^\gamma + x^\gamma)}{h}\leq  (x+h)^\xi u(x).$$
We now set $h=\sqrt{\varepsilon}x$, divide both sides by $\gamma x^{\gamma-1}$ and let $x\rightarrow +\infty$. This yields :
$$(1+\sqrt{\varepsilon})^{\gamma-1} - 2\sqrt{\varepsilon}\frac{(1+\sqrt{\varepsilon})^\gamma+1}{\gamma}\leq \left(1+\sqrt{\varepsilon}\right)^\xi
\liminf_{x\rightarrow+\infty}\frac{x^\xi u(x)}{\gamma x^{\gamma-1}}$$
i.e., letting $\varepsilon\downarrow0$, 
$$1\leq
\liminf_{x\rightarrow+\infty}\frac{x^\xi u(x)}{\gamma x^{\gamma-1}}.$$
The other bound is obtained similarly, by starting from $\displaystyle \int_{x-h}^{x} z^\xi u(z)dz$.

\end{proof}

\addcontentsline{toc}{section}{References}

\end{document}